\newtheorem{theorem}{Theorem}[section]
\newtheorem{thm}[theorem]{Theorem}
\newtheorem{remark}[theorem]{Remark}
\newtheorem{prop}[theorem]{Proposition}
\newtheorem{claim}[theorem]{Claim}
\newtheorem{conjecture}[theorem]{Conjecture}
\newtheorem{lemma}[theorem]{Lemma}
\newtheorem{cor}[theorem]{Corollary}
\theoremstyle{definition}
\newtheorem{definition}[theorem]{Definition}
\newcommand{\NN}{{\mathbb{N}}}
\newcommand{\RR}{{\mathbb{R}}}
\newcommand{\sub}{\subseteq}
\newcommand{\sN}[1]{_{#1\in \NN}}
\newcommand{\uhr}[1]{\! \upharpoonright_{#1}}
\newcommand{\ML}{Martin-L{\"o}f}
\newcommand{\SI}[1]{\Sigma^0_{#1}}
\newcommand{\PI}[1]{\Pi^0_{#1}}
\newcommand{\PPI}{\PI{1}}
\newcommand{\bi}{\begin{itemize}}
\newcommand{\ei}{\end{itemize}}
\newcommand{\bc}{\begin{center}}
\newcommand{\ec}{\end{center}}
\newcommand{\ES}{\emptyset}
\newcommand{\tp}[1]{2^{#1}}
\newcommand{\ex}{\exists}
\newcommand{\fa}{\forall}
\newcommand{\Kuc}{Ku{\v c}era}
\newcommand{\seqcantor}{2^{ \NN}}
\newcommand{\cantor}{\seqcantor}
\newcommand{\strcantor}{2^{ < \omega}}
\newcommand{\Opcl}[1]{[#1]^\prec}
\newcommand{\n}{\noindent}
\newcommand{\leb}{\mathbf{\lambda}}
\newcommand{\sss}{\sigma}
\newcommand \seq[1]{{\left\langle{#1}\right\rangle}}
\newcommand\+[1]{\mathcal{#1}}
\newcommand{\wt}{\widetilde}
\newcommand{\lra}{\leftrightarrow}
\newcommand{\LR}{\Leftrightarrow}
\newcommand{\RA}{\Rightarrow}
\newcommand{\LA}{\Leftarrow}
\newcommand{\sssl}{\ensuremath{|\sigma|}}
\title{Multiple recurrence and algorithmic randomness}
\author{Rodney G. Downey}
\address{Department of Mathematics, Statistics and Operations
  Research, Victoria University of Wellington, Wellington,  New Zealand}
\email{rod.downey@msor.vuw.ac.nz}
\author{Satyadev Nandakumar}
\address{Department of Computer Science and Engineering, Indian
  Institute of Technology Kanpur, Kanpur, Uttar Pradesh, India.}
\email{satyadev@cse.iitk.ac.in}
\author{Andr\'{e} Nies}
\address{Department of Computer Science, University of Auckland, Auckland, New
  Zealand} 
\email{andre@cs.auckland.ac.nz}
\subjclass[2010]{Primary 03D32; Secondary 37A30}
\begin{document}

\maketitle

\begin{abstract}  This work contributes to the programme of studying  effective versions of ``almost everywhere" theorems in analysis and ergodic theory via algorithmic randomness. We determine the level of randomness needed for a point in  Cantor space $ \{0,1\}^{\NN}$ with the uniform measure and the usual shift so that    effective versions of the  multiple recurrence theorem of Furstenberg holds for iterations starting at the point.  We consider recurrence into  closed sets that possess various degrees of effectiveness: clopen, $\PPI$ with computable measure, and $\PPI$.  The notions of Kurtz, Schnorr, and \ML\ randomness, respectively, turn out to be sufficient. We obtain similar   results for multiple recurrence with respect to  the $k$ commuting shift operators  on $\{0,1\}^{\NN^{\normalsize k}}$. \end{abstract}

  \section{Introduction} 
A major subarea of mathematical logic seeks to understand the \emph{effective 
content} of mathematics; that is, to understand what part of mathematics 
is algorithmic and to calibrate the computational resources 
needed for classical theorems. 
Ever since Turing's original paper \cite{Turing36}  {analysis}
has been part of this tradition. The last decade or so has seen 
a great deal of work using algorithmic tools to understand and 
calibrate our intuitive understanding of randomness of an individual 
sequence. (Downey and Hirschfeldt \cite{Downey.Hirschfeldt:book}, Nies \cite{Nies:book}, Li-Vitanyi
\cite{LV}). For example, a real can be regarded as random if 
no effective betting strategy succeeds in making infinite 
capital betting on the bits of the real. A natural area for the combination of 
these two parts of effective mathematics is in the area of 
``almost everywhere'' mathematics. 
For example, Brattka, Miller and Nies \cite{BMN} have established that 
various  randomness properties of a real correspond precisely to 
differentiability at the real of, for instance, computable functions of bounded variation, or computable nondecreasing functions.

This paper contributes to this program. 
We consider   one of the most powerful areas of mathematics 
arising in the 20th century, beginning with  the work of Poincair{\'e}, Birkhoff,
von Neumann and others: \emph{ergodic theory}.
This theory is concerned with ``average case long term behavior''
of certain kind of recurrent systems, and has applications from 
pure mathematics to classical physics. 

In this paper we make the first contributions to studying a celebrated results in ergodic theory,  Furstenberg's
multiple recurrence  theorem (see the new edition of Furstenberg's book~\cite{Furstenberg:2014}). This theorem showed that 
methods from ergodic theory could be used to 
derive important results in additive number theory, and 
has ushered in a revolution in this area.

\section{Background in ergodic theory}

We begin with some background that will lead to our main definition.
  A measurable  operator $T\colon  X \to X$   on a probability space  $(X, \+ B, \mu)$  is called \emph{measure preserving} if   $\mu T^{-1}(A) = \mu A$ for each $ A \in \+ B$. 
We say that 
$A\in \+ B$ is \emph{invariant} under $T$ if $T^{-1}A=A$ (up to a null set).
Finally,
$T$ is \emph{ergodic} if 
the only $T$-invariant measurable subsets of $X$ are either 
null or co-null\footnote{$T$ being ergodic is implied by 
various
\emph{mixing} properties, for us  relevent one is 
\emph{strong mixing} which means that  
for $A,B\in \+ B$,
$\lim_{n\to \infty} \mu(A\cap T^{-n} B)=\mu(A)\mu(B).$}.

A classical result here is that almost all points in a probability space behave 
in a regular way.
For example,
consider the following, essentially the first ergodic theorem.

\begin{theorem}[Poincair{\'e} Recurrence Theorem]
Let $(X,\+ B,\mu)$ be a probability space,   $T\colon X\to X$ be 
measure preserving, and let $A\in \+ B$ have  positive measure. 
For almost all $x\in X$, 
$T^{-n}(x)\in A$ for infinitely many $n$.
\end{theorem}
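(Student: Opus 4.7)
The plan is to prove the standard formulation of Poincar\'e recurrence: for almost every $x \in A$, the forward orbit $T^n(x)$ lies in $A$ for infinitely many $n$. The statement in the excerpt writes $T^{-n}(x)$, which I read either as the same conclusion for $T^{-1}$ in the invertible case, or as a minor notational slip for the forward orbit; the argument is unchanged.

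First, I would isolate the ``wandering part'' of $A$. Let
$$E \;=\; \{x \in A : T^k(x) \notin A \text{ for all } k \ge 1\},$$
the set of points in $A$ whose forward orbit never re-enters $A$. The key observation is that the iterated preimages $T^{-n}(E)$ for $n = 0, 1, 2, \ldots$ are pairwise disjoint: if $x \in T^{-n}(E) \cap T^{-m}(E)$ with $n < m$, then $T^n(x) \in E$ yet $T^{m-n}(T^n(x)) = T^m(x) \in E \subseteq A$ with $m-n \ge 1$, directly contradicting the definition of $E$. Since $T$ is measure preserving, each $T^{-n}(E)$ has measure $\mu(E)$; packing countably many disjoint sets of this common measure into a probability space forces $\mu(E)=0$.

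Next, I would bootstrap ``returns at least once'' to ``returns infinitely often''. Define $F = \{x \in A : T^n(x) \in A \text{ for only finitely many } n \ge 0\}$. For each $x \in F$, choose the largest $n \ge 0$ with $T^n(x) \in A$; then $T^n(x) \in E$, whence $F \subseteq \bigcup_{n \ge 0} T^{-n}(E)$. Countable subadditivity together with measure preservation then yield $\mu(F) = 0$, so almost every $x \in A$ has $T^n(x) \in A$ for infinitely many $n$.

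The proof has no substantial obstacle---this is one of the oldest results in ergodic theory---but the careful point is the disjointness of the $T^{-n}(E)$, which is precisely where measure preservation does the work. I would also flag that the conclusion is naturally about ``almost all $x \in A$'' rather than ``almost all $x \in X$'' as literally stated (the identity map with $\mu(A)<1$ is a cheap counterexample to the literal reading), so I would slightly rephrase the statement, or restrict it to the set of $x$ whose orbit meets $A$ at all, before proceeding.
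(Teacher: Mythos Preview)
The paper does not actually supply a proof of the Poincar\'e Recurrence Theorem: it is quoted in the background section as a classical result, with no argument given. So there is nothing to compare your proposal against.

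That said, your proof is the standard one and is correct. The disjointness of the $T^{-n}(E)$ is exactly the point where measure preservation is used, and the bootstrap from ``returns once'' to ``returns infinitely often'' via $F \subseteq \bigcup_{n\ge 0} T^{-n}(E)$ is clean. Your remarks about the statement are also well taken: as written, $T^{-n}(x)$ is ill-defined for a noninvertible $T$, and the conclusion should be for almost every $x\in A$ rather than almost every $x\in X$ (the paper's phrasing is a common informal shorthand that you have correctly disambiguated).
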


The remarkable theorem of Furstenberg says that 
in certain circumstances, $T^n(x)\in E$ for a  collection of $n$ that forms an 
arithmetical progression.
More generally,
suppose we have $k$ commuting measure preserving operators.  There is an $n$ and  a positive measure set of points so that  $n$  iterations   of each of the operators  $T_i$,  starting from each   point in the set, ends in $A$:
\begin{theorem}[Furstenberg  multiple recurrence theorem, see~\cite{Furstenberg:2014} Thm.\ 7.15]  \label{thm:FurMRT2} \ \\ Let $(X, \+ B, \mu)$ be a probability space. Let $T_1, \ldots, T_k$ be commuting measure preserving operators on $X$. 
Let    $A \in \+B$ with $\mu A > 0$.  There is 
$n>0$ such that $0< \mu \bigcap_i T_i^{-n}(A)$.   \end{theorem}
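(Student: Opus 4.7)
The plan is to follow Furstenberg's original ergodic-theoretic proof, which proceeds by induction on $k$ and relies on a structural analysis of measure-preserving systems. The base case $k = 1$ is the Poincar\'{e} recurrence theorem already stated above (and an averaging argument gives that the set of return times has positive lower density). For the inductive step, it will be convenient to first reduce to the ergodic case by invoking the ergodic decomposition of $\mu$ with respect to the group generated by $T_1, \ldots, T_k$, and then to prove the stronger averaged statement $\liminf_N \frac{1}{N}\sum_{n=1}^N \mu\bigl(\bigcap_i T_i^{-n} A\bigr) > 0$, since this quantitative form is what actually propagates through the constructions below.

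Next, I would attack the problem via the Furstenberg--Zimmer structure theorem: every ergodic system $(X, \+B, \mu, T_1, \ldots, T_k)$ can be written as a (possibly transfinite) tower of extensions starting from the trivial one-point factor, where each step is either a \emph{compact extension} or a \emph{weak mixing extension} and limit stages are inverse limits. The averaged multiple recurrence property is then established by verifying three preservation results. First, it holds vacuously for the trivial factor. Second, it is preserved under weak mixing extensions: using a van der Corput-type lemma together with the relative decoupling provided by weak mixing, the joint return-time average over $k$ operators reduces to one over $k - 1$ operators, so the inductive hypothesis applies. Third, compact extensions preserve the property: here the fibers carry an almost periodic structure, and one combines conditional almost periodicity with a van der Waerden-type combinatorial argument to upgrade simultaneous returns from the base to simultaneous returns in the extension on a set of positive measure. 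Finally, one observes that passage to inverse limits preserves the averaged recurrence by a routine $L^2$-approximation argument.

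The main obstacle will be the compact extensions step. Making precise the conditional Hilbert-module framework (fiber products relative to the base factor, conditional $L^2$-spaces, relatively compact functions), and then executing the coloring argument that extracts a uniform return-time $n$ for a positive-measure collection of fibers simultaneously, is the technical heart of the proof and the place where the inductive structure really does work. Once this lemma is in hand, climbing the Furstenberg--Zimmer tower is mechanical, and at the top one obtains a positive lower density of $n > 0$ with $\mu\bigl(\bigcap_i T_i^{-n} A\bigr) > 0$, from which the stated existence of a single such $n$ is immediate.
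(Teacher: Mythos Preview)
The paper does not give its own proof of this theorem: it is stated as a known result with a reference to Furstenberg's book (Thm.~7.15 in~\cite{Furstenberg:2014}), and the only argument in the paper touching it is the short passage in Subsection ``Proof that Thm.~\ref{thm:FurMRT2} and Cor.~\ref{prop:FurMRT3} are equivalent'', which merely shows the two formulations imply each other. So there is nothing in the paper to compare your attempt to.

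That said, your outline is essentially the standard Furstenberg (and Furstenberg--Katznelson, for general commuting $T_i$) route found in the cited reference: reduce to the ergodic case, prove the averaged (SZ) statement, and climb the Furstenberg--Zimmer tower through compact extensions, relatively weak mixing extensions, and inverse limits. The identification of the compact-extension step as the technical heart is accurate. One small caution: for $k$ genuinely distinct commuting transformations (as opposed to powers $V,V^2,\ldots,V^k$ of a single map), the weak mixing step does not reduce to $k-1$ transformations by a simple induction on $k$; rather one needs the relative van der Corput/PET machinery of Furstenberg--Katznelson. Your sketch glosses over this, but it does not change the overall architecture you describe.
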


For this paper the following equivalent formulation  of Thm.\  \ref{thm:FurMRT2} will matter. We verify their equivalence in Subsection~\ref{ss:equivalence}.
\begin{cor} \label{prop:FurMRT3} With the hypotheses of Thm.\ \ref{thm:FurMRT2}, for $\mu$-a.e.\ $x \in A$, there is an $n>0$ such that $x \in \bigcap_i T_i^{-n}(A)$. \end{cor}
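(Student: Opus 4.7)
The plan is a standard ``bad set'' contradiction, preceded by a small strengthening of Theorem~\ref{thm:FurMRT2}. First, I would strengthen the conclusion of that theorem to include $A$ itself in the intersection: apply it to the $k+1$ commuting measure-preserving operators $T_0 = \mathrm{id}, T_1, \ldots, T_k$, and use $T_0^{-n}(A) = A$, to obtain some $n > 0$ with $\mu(A \cap \bigcap_{i=1}^{k} T_i^{-n}(A)) > 0$. Having $A$ inside the intersection is what makes the argument below close.

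Next, I would define the \emph{bad set}
\[ B := A \setminus \bigcup_{n > 0} \bigcap_{i=1}^{k} T_i^{-n}(A), \]
which is measurable, contained in $A$, and consists of precisely those $x \in A$ for which the corollary fails. The corollary is therefore equivalent to the assertion $\mu(B) = 0$, so suppose for contradiction that $\mu(B) > 0$.

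Apply the strengthened form of Theorem~\ref{thm:FurMRT2} with $B$ in place of $A$ to obtain some $n > 0$ such that
\[ \mu\!\left(B \cap \bigcap_{i=1}^{k} T_i^{-n}(B)\right) > 0. \]
Any point $x$ in this set lies in $B \subseteq A$ and satisfies $T_i^n(x) \in B \subseteq A$ for every $i$, hence $x \in A \cap \bigcap_i T_i^{-n}(A)$, contradicting $x \in B$.

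I do not anticipate any real obstacle here: the only step requiring a moment's thought is the preliminary strengthening that puts $A$ back into the intersection, and once this is in hand the bad-set argument is immediate. In particular, no structural properties of the probability space beyond those already assumed in Theorem~\ref{thm:FurMRT2} are needed, and the strengthened theorem is applied to the measurable set $B$ rather than $A$, which is the usual trick for converting ``there exists a positive-measure return'' statements into ``almost every point returns'' statements.
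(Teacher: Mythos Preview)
Your proof is correct. The preliminary strengthening via $T_0 = \mathrm{id}$ is exactly the right move, and once you have $\mu\bigl(B \cap \bigcap_i T_i^{-n}(B)\bigr) > 0$ the contradiction is immediate.

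The paper takes a somewhat different route: an exhaustion argument rather than a one-shot contradiction. Setting $R_n = \bigcap_i T_i^{-n}(A)$, it recursively builds a descending chain $A_0 = A \supseteq A_1 \supseteq \cdots$ by, whenever $\mu A_p > 0$, locating some $n$ with $\mu(A_p \cap R_n) > 0$ and putting $A_{p+1} = A_p \setminus R_n$; one then argues that the intersection $A_N = \bigcap_p A_p$ is null. Your bad set $B$ is exactly this $A_N$, so the two arguments reach the same object, but you dispose of it in a single application of the strengthened theorem to $B$ itself, while the paper peels off the $R_n$'s one at a time. Your version is shorter and avoids the bookkeeping of the recursive construction; the paper's version is more constructive in flavour but tacitly relies on the same ``include the identity'' observation to guarantee that the needed $n$ with $\mu(A_p \cap R_n) > 0$ actually exists at each step.
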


%

An important   special case is that   $T_i $ is the power $ V^i$ of  a measure-preserving  operator $V$. 
\begin{cor} \label{cor:MRT} Let $(X, \+ B, \mu)$ be a probability space. Let $V$ be  a  measure preserving operator.  Let $A \in \+ B$  and $\mu A > 0$. For each $k$, for $\mu$-a.e.\ $x \in A$   there is $n$ such that $\fa i. {1 \le i \le k}  \, [x \in V^{-ni}( A)]$. 
\end{cor}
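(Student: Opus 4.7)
The plan is to deduce this directly from Corollary \ref{prop:FurMRT3} by taking $T_i := V^i$ for $i = 1, \ldots, k$. First I would verify that this family of operators satisfies the hypotheses of Thm.\ \ref{thm:FurMRT2} (equivalently, of Corollary \ref{prop:FurMRT3}). Since $V$ is measure preserving, an easy induction shows that each power $V^i$ is measure preserving: indeed, $\mu(V^i)^{-1}(B) = \mu V^{-1}(V^{-(i-1)}(B)) = \mu V^{-(i-1)}(B)$, which equals $\mu B$ by the inductive hypothesis. Pairwise commutativity is automatic, since $V^i \circ V^j = V^{i+j} = V^j \circ V^i$ for all $i, j$.

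Given these verifications, Corollary \ref{prop:FurMRT3} applied to the tuple $(V, V^2, \ldots, V^k)$ and the set $A$ (which still has positive measure by hypothesis) yields a conull subset of $A$ on which, for each point $x$, there exists some $n > 0$ with
\[
x \in \bigcap_{i=1}^{k} (V^i)^{-n}(A) = \bigcap_{i=1}^{k} V^{-ni}(A),
\]
which is precisely the conclusion of the corollary.

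There is essentially no obstacle: the statement is a direct specialization of Corollary \ref{prop:FurMRT3}, and the only content is the trivial observation that the powers of a single measure-preserving transformation form a commuting family of measure-preserving transformations. The only point that requires any care is the algebraic identity $(V^i)^{-n} = V^{-ni}$ (as a set-theoretic preimage), which follows from the fact that taking preimages reverses composition and that $(V^i)^n = V^{in}$.
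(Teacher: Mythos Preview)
Your proposal is correct and matches the paper's approach: the paper presents Corollary~\ref{cor:MRT} simply as the special case $T_i = V^i$ of Corollary~\ref{prop:FurMRT3}, without giving a separate proof, and your argument spells out precisely this specialization together with the routine verifications that the $V^i$ are measure preserving and commute.
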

This is the form which, as  Furstenberg \cite{Furstenberg:2014} showed,  can be used to derive 
van der Waerden's theorem on arithmetic progressions; see e.g. Graham, Rothschild and Spencer~\cite{GRS}.
(A full      ``almost everywhere" version of Corollary \ref{cor:MRT} would assert that for $\mu$-a.e.\ $x$ there is $n$ such that  $\fa i. {1 \le i \le k}  \, [  x \in V^{-ni}( A)]$. 
Note that we can expect such a version   only    for ergodic operators, even if $k=1$. For, if $A$ is $S$-invariant,   then an iteration starting from $x \not \in A$ will never get into~$A$.)

We will mostly assume that   $(X, \+ B, \mu)$   is  Cantor space $\cantor$ with the product measure $\leb$.  In the following $X,Y,Z$ will denote elements of Cantor space. We will work with   the shift  $T \colon \cantor \to \cantor$  as the measure preserving operator. Thus, $T(Z)$ is obtained by deleting the first entry of the bit  sequence $Z$. We note that this operator    is (strongly)  mixing, and hence strongly ergodic, namely, all of its powers are ergodic.

The following is our central definition.\begin{definition} \label{def:central} Let $\+ P \sub \cantor$ be measurable, and let  $Z \in \cantor$. We say that   $Z$ is  \emph{$k$-recurrent  in $\+ P$}  if  there is $n \ge 1$ such that 
\[ \tag{$\diamond$} Z \in \bigcap_{1 \le i \le k} T^{-ni} (\+ P). \]
We  say that $Z$ is \emph{multiply recurrent} in $\+ P$ if $Z$ is $k$-recurrent in $\+ P$  for each $k \ge 1$.
\end{definition}
In other words, $Z$ is $k$-recurrent in $\+ P$ if there is $n \ge 1$ such that removing  $n, 2n, \ldots, kn$   bits from the beginning of   $Z$ takes us into $\+ P$. 

We only consider multiple recurrence for  closed sets.   Note that for (multiple) recurrence in the sense of Thm.\ \ref{thm:FurMRT2},   this  is not an essential restriction, because any set of positive measure contains a closed subset of positive measure.

\subsection*{Goal of  this paper} 
Algorithmic information theory is an area of 
research which attempts to give meaning to 
randomness of individual events.  
We remind the reader that $A\in 2^\omega$ is 
Martin-L\"of random if $A\not\in \cap_{i} U_i$
where $\{U_i\colon i\in {\mathbb N}\}$ is a
computable collection of $\Sigma_1^0$ classes with 
$\mu(U_i)\le 2^{-i}$, and $B$ is \emph{Kurtz} random 
if $A\in Q$ for every $\Sigma_1^0$ class of measure 1\footnote{For
 background on randomness
 notions see \cite[Ch.\ 3]{Nies:book} or \cite[Chapters 6 and
   7]{Downey.Hirschfeldt:book}.} 

 Since limit laws in probability theory embody certain intuitive
properties associated with randomness, it is important to establish
that objects of study in algorithmic randomness satisfy such
laws. Indeed, the Law of Large Numbers \cite{vanLambalgen}, the Law of
Iterated Logarithm \cite{Vovk87},  Birkhoff's Ergodic theorem
\cite{V97,FGMN12} and the Shannon-McMillan-Breiman Theorem
\cite{Hochman09,Hoyrup12} are satisfied by Martin-L\"of random
sequences. These results serve to justify the intuition that the set
of Martin-L\"of random points is a canonical set of measure 1 which obeys all
effective limit laws that  hold almost everywhere.
 
 In this vein, we   analyse how weaker and weaker
 effectiveness conditions on  a closed set $\+ P$ ensure  multiple recurrence when
 starting from a  sequence $Z$ that satisfies a stronger and stronger
 randomness property for an algorithmic test notion. We begin with the
 strongest effectiveness condition, being clopen; in this case it is
 easily seen that   weak (or Kurtz)   randomness of $Z$  suffices. As
 most general effectiveness condition  we will  consider being
 effectively closed (i.e.\ $\PI 1$); \ML-randomness turns out to be
 the appropriate notion. The proof will import some method from the
 case of a clopen $\+ P$.  Note that $\PI 1$ subsets of Cantor space
 are often called \emph{$\PI 1$ classes}.

The major part of the present work establishes the multiple recurrence
property of algorithmically random sequences under the left-shift
transformation. In the question of ``structure versus randomness'' in
measure-preserving dynamical systems, this represents a setting which
has a high degree of randomness. In a final section, we also provide a
proof of the principle, well-known in practice, that a very structured
system, namely Kronecker systems also exhibit multiple recurrence for
all points.

\subsection*{Related results} In the theory of algorithmic randomness,
we are interested in which limit laws in probability theory are 
applicable to algorithmically random objects. Of particular 
interest are computable versions of theorems in dynamical
systems. An early result is the   theorem of Ku{\v c}era~\cite{Kucera:85} that a
sequence $X$ is Martin-L\"of random if and only if for every $\PPI$
class $P$ with positive measure, there is a tail of $X$ which is in
$P$. This result can be recast into ergodic-theoretic
language using the left-shift transformation. Bienvenu, Day, Hoyrup,
Mezhirov and Shen \cite{BDHMS12} have generalized this to arbitrary
computable ergodic transformations. They show that for a computable
probability space and a computable ergodic transformation $T$, a
member $x$ in the space is Martin-L\"of random if and only if for
every $\PPI$ set $P$ of positive measure, there is an $n$ such that
$T^n x \in P$. These can be viewed as how Poincar\'e recurrence
relates to algorithmic randomness notions.

Birkhoff's ergodic theorem states that for every ergodic transformation $T$
defined on a probability space $(X,\mathcal{F}, \mu)$ and for every
function $f \in L^{1}(X)$, the average $\frac 1 n 
\sum_{i=0}^{n-1} f(T^ix)$ converges to $\int f d\mu$ for
$\mu$-almost every $x \in X$. The question of how Birkhoff's ergodic
theorem relates to algorithmic randomness has also been
studied, starting with V'yugin~\cite{V97}. G\'{a}cs, Hoyrup and Rojas \cite{Gacs.Hoyrup:11} establish
that a point $x$ in a computable probability space under a computable
ergodic transformation $T: X \to X$ is Schnorr random\footnote{Recall
that $C$ is called \emph{Schnorr} random if 
$C\not\in \cap_i U_i$ 
where $\{U_i\colon i\in {\mathbb N}\}$ is a
computable collection of $\Sigma_1^0$ classes with
$\mu(U_i)= 2^{-i}$; like Kurtz randomness, a notion strictly weaker 
than Martin-L\"of randomness.}
   if and only if
it obeys the Birkhoff ergodic theorem with respect to all $\PPI$ sets
of computable positive measure --- \emph{i.e.} the asymptotic
frequency with which the orbit of $x$ visits such a $\PPI$ set $P$ is
exactly the probability of~$P$.  Franklin, Greenberg, Miller and Ng
\cite{FGMN12} show that under the same setting, but without the
assumption that the measure of the $\PPI$ set be  computable, a point
$x$ is Martin-L\"of random if and only if it obeys the Birkhoff
ergodic theorem with respect to $\PPI$ sets of positive measure.


\subsection*{Proof that Thm.\ \ref{thm:FurMRT2} and   Cor.\  \ref{prop:FurMRT3} are equivalent. }   \label{ss:equivalence} 
%
 Cor.\  \ref{prop:FurMRT3}   clearly yields Thm.\ \ref{thm:FurMRT2} because it implies that $  \bigcap_i T_i^{-n}(A)$   has positive measure for some $n$.  Conversely,  let us  show that  Thm.\ \ref{thm:FurMRT2} yields   Cor.\  \ref{prop:FurMRT3}. 
Let  $R_n = \bigcap_i T_i^{-n}(A)$.  We recursively define a sequence $\seq {n_p}_{p< N}$ of numbers and a descending sequence $\seq {A_p}_{p< N}$ of sets, where $0<N \le \omega$.

Let $n_0 = 0$, and  $A_0 = A$. Suppose  $n_p$ and $A_p$ have been defined. If  $\mu A_p =0$ let $N=p+1$ and finish. Otherwise, let $n_{p+1}$ be the least $n > n_p$ such that $\mu  (A_p \cap R_n) >0$, and   let  $A_{p+1} = A_p - R_n$. 

Let $A_N= \bigcap_{p<N} A_p$. Then $\mu A_N =0$: This is clear if $N$ is finite. If $N= \omega$ and $\mu A_N >0$, by Thm.\ \ref{thm:FurMRT2}  there  is $n$ such that 
$\mu( R_n \cap A_N) > 0$. This contradicts the definition of $A_{p+1}$ where $n_p < n \le n_{p+1}$.

Since $\mu A_N =0$,  Cor.\  \ref{prop:FurMRT3}  follows.

\subsection*{Notation}
For a set of strings $S \sub \strcantor$, by $\Opcl S$ we denote the open set $\{ Y  \in \cantor \colon \, \ex \sss \in S \, [\sss \prec Y]\}$. 
We write $\leb \Opcl S$ for the measure of this set, namely $\leb (\Opcl S)$.

Recall that we  work with   the shift  $T \colon \cantor \to \cantor$  as the measure preserving operator.  We will   write   $Z_n$ for $T^n (Z)$,   the tail of $Z$ starting at bit position $n$.  Thus, for any $\+ C \sub \cantor$,  $Z \in \cantor$ and $k \in \NN$, $Z \in T^{-k} (\+ C) \lra Z_k \in \+ C$.

\section{Multiple recurrence for weakly  random sequences} 
Recall   that $Z$ is weakly  (or Kurtz) random if $Z$ is in no null $\PI
1$ class.  This formulation is equivalent to the usual one in terms of effective tests.
\begin{prop} \label{prop: Kurtz} Let $\+ P \sub \cantor$  be a
  non-empty  clopen set. Each weakly   random bit sequence $Z$ is
  multiply recurrent  in $\+ P$.\end{prop}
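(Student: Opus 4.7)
The plan is to prove that for every $k \ge 1$ the set
\[ B_k = \{Z \in \cantor \colon Z \text{ is not $k$-recurrent in $\+ P$}\} \]
is a null $\PI 1$ class. Since weak randomness of $Z$ means exactly that $Z$ avoids every null $\PI 1$ class, this implies that $Z$ is $k$-recurrent in $\+ P$ for every $k$, i.e., multiply recurrent. I do not expect to need Furstenberg's theorem here: the clopen case should reduce to a direct independence calculation.

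First I would fix a ``block length'' for $\+ P$. Since $\+ P$ is non-empty clopen, by compactness I can write $\+ P = \bigcup_{\sigma \in F}[\sigma]$ for a finite set $F \sub \fs$ of strings, which after padding I may take of common length $\ell$; write $p := \leb \+ P > 0$. Membership of $Z$ in $T^{-m}(\+ P)$ then depends only on the bits of $Z$ in positions $[m, m+\ell-1]$. Writing $E_n = \bigcap_{i=1}^{k} T^{-ni}(\+ P)$, the set $B_k$ is the countable intersection $\bigcap_{n \ge 1}(\cantor \setminus E_n)$ of clopen sets, uniformly determined by the finite datum $F$; so $B_k$ is $\PI 1$.

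The heart of the argument is to show $\leb B_k = 0$. For $n \ge \ell$ the $k$ blocks $[ni, ni+\ell-1]$ (one for each $i \in \{1, \ldots, k\}$) appearing in the description of $E_n$ are pairwise disjoint, so by the obvious independence of cylinders along disjoint coordinates, $\leb E_n = p^k$. I would then select a subsequence $n_1 < n_2 < \cdots$ by $n_1 = \ell$ and $n_{j+1} = k n_j + \ell$, arranged so that the coordinates used to decide $E_{n_j}$ all lie in $[n_j, k n_j + \ell - 1]$ and these intervals are pairwise disjoint as $j$ varies. The events $\cantor \setminus E_{n_j}$ are then mutually independent, each of measure $1 - p^k < 1$, yielding
\[ \leb B_k \;\le\; \leb \bigcap_{j \ge 1}(\cantor \setminus E_{n_j}) \;=\; \prod_{j \ge 1}(1 - p^k) \;=\; 0. \]

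The only subtle point is the spacing of the $n_j$: choosing $n_{j+1} > k n_j + \ell$ is precisely what buys complete independence between different levels and reduces the nullness calculation to one line. Once $\leb B_k = 0$ is established, the proposition follows from the definition of weak randomness: $Z \notin B_k$ for every $k$, hence $Z$ is multiply recurrent in $\+ P$.
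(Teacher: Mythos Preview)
Your argument is correct and follows essentially the same route as the paper: fix $k$, choose a block length $\ell$ for $\+ P$, pick a sparse subsequence of $n$'s so that all the relevant bit-windows are pairwise disjoint, and conclude nullness of the resulting $\PI 1$ class by independence. The only cosmetic differences are that the paper uses the geometric spacing $n_t = \ell(k+1)^t$ and defines its null $\PI 1$ class directly as the intersection along that subsequence, rather than first writing down the full non-recurrence set $B_k$ and then bounding it by a sub-intersection as you do.
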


\begin{proof}

Suppose    $Z$ is not $k$-recurrent in $\+ P$ for some $k \ge 1$. We define a null $\PPI$ class  $\+ Q$ containing $Z$. Let $n_0$ be least such that $\+ P = \Opcl F$ for some set of strings of length $n_0$.  Let $n_t= n_0(k+1)^t$ for $t \ge 1$. Let
\[ \+ Q = \bigcap_{t \in \NN}  \{ Y \colon   \bigvee_{1 \le i \le k} Y_{i n_t} \not \in \+ P\}. \]
 By definition  of $n_0$  the conditions in  the same disjunction are independent, so  we have \[ \leb (\bigvee_{1 \le i \le k} Y_{i n_t} \not \in \+ P ) =  1 - (\leb \+ P) ^k < 1.\] By definition of the $n_t$ for $t >0$, the class $\+ Q$ is the independent intersection of  such classes  indexed by~$t$. Therefore $\+ Q$ is null. Clearly $\+ Q$ is $\PPI$.
 
   By hypothesis $Z \in \+ Q$. So $Z$ is not weakly random.
\end{proof} 

\section{Multiple recurrence for     Schnorr random sequences} 
 
\begin{thm} Let $\+ P \sub \cantor$  be a $\PPI$ class such that   $0< p = \leb  \+ P$ and $p$ is a computable real. Each Schnorr   random   $Z$ is multiply recurrent  in $\+ P$.
\end{thm}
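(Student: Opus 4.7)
The plan is to adapt the proof of Proposition~\ref{prop: Kurtz} by replacing the clopen set used there with an effective clopen outer approximation of the $\PPI$ class $\+P$. Write $\+P=\bigcap_s\+P_s$ with $\+P_s\supsetneq\+P_{s+1}$ a computable decreasing sequence of clopen sets, with $\+P_s$ determined by cylinders of common length $l_s$; computability of $p$ then lets us effectively find, for any $\epsilon>0$, an index $s$ with $\leb\+P_s\le p+\epsilon$. The difficulty is that a clopen outer approximation $\+K\supseteq\+P$ goes the ``wrong way'' for the Kurtz argument, since $Z$ not $k$-recurrent in $\+P$ need not be non-$k$-recurrent in $\+K$; I will therefore add a second component to the test to absorb the slack $\+K\setminus\+P$.

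Fix $k\ge 1$. For each $m\ge 1$, I first choose $T_m$ large enough that $(1-p^k)^{T_m}\le 2^{-m-1}$, then by a computable search find an index $s_m$ so that the clopen set $\+K_m:=\+P_{s_m}$, of cylinder-length $l_m:=l_{s_m}$, satisfies $\epsilon_m:=\leb\+K_m-p$ small enough that $T_mk\epsilon_m\le 2^{-m-1}$. Setting $n_t:=l_m(k+1)^t$ ensures, exactly as in Proposition~\ref{prop: Kurtz}, that the events $\{Y:Y_{in_t}\in\+K_m\}$ for $1\le i\le k$ and $1\le t\le T_m$ depend on pairwise disjoint bit-windows and are mutually independent under $\leb$. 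Define the clopen Kurtz-style component
\[
V_m=\bigcap_{t=1}^{T_m}\bigcup_{i=1}^k\{Y:Y_{in_t}\notin\+K_m\},
\]
with computable measure $(1-(\leb\+K_m)^k)^{T_m}\le 2^{-m-1}$, and the c.e.\ open error component
\[
W_m=\bigcup_{t=1}^{T_m}\bigcup_{i=1}^k T^{-in_t}(\+K_m\setminus\+P),
\]
with $\leb W_m\le T_mk\epsilon_m\le 2^{-m-1}$ by subadditivity. A case split verifies that $U_m:=V_m\cup W_m$ captures every $Z$ not $k$-recurrent in $\+P$: either for every $t\le T_m$ some $Z_{in_t}\notin\+K_m$, so $Z\in V_m$; or some $t\le T_m$ has all $Z_{in_t}\in\+K_m$, in which case failure of $k$-recurrence in $\+P$ at $n=n_t$ forces some $Z_{in_t}\in\+K_m\setminus\+P$, and hence $Z\in W_m$.

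The main obstacle is promoting $(U_m)$ to a bona fide Schnorr test, since this requires $\leb U_m$ to be uniformly computable rather than merely computably upper-bounded by $2^{-m}$. The measure $\leb V_m$ is computable, but $\leb W_m$ is a priori only known by the computable upper bound $T_mk\epsilon_m$. I expect to resolve this either by refining $W_m$ to the clopen set $\bigcup_{t,i}T^{-in_t}(\+K_m\setminus\+P_{s'_m})$ using a deeper approximation $\+P_{s'_m}$ of $\+P$, and then absorbing the residual escape $\+P_{s'_m}\setminus\+P$ through a telescoping layered construction over a chain $\+K_m=\+P_{s_m^{(1)}}\supsetneq\+P_{s_m^{(2)}}\supsetneq\cdots\searrow\+P$ with computably summable error contributions; or by appealing to the equivalent characterization of Schnorr randomness by lower-semicomputable integral tests with computable $L^1$-norm upper bounds. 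Either route yields that any Schnorr random $Z$ avoids $U_m$ for sufficiently large $m$, and therefore must be $k$-recurrent in $\+P$ for every $k$, i.e.\ multiply recurrent.
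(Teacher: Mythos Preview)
Your approach is essentially the paper's: split the failure set into a ``Kurtz part'' (your $V_m$, their null $\PPI$ class $\+Q_v$) and a $\SI 1$ ``error part'' absorbing the gap between the clopen approximation and $\+P$ (your $W_m$, their $\+G_v$). The only organisational difference is that you fix one clopen approximation $\+K_m$ and truncate at $T_m$ stages, whereas the paper lets the approximation improve with $t$ along an infinite sequence $n_t$ and handles the Kurtz part separately via weak randomness; both work.

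Your hesitation at the end is unnecessary. The measure $\leb W_m$ \emph{is} uniformly computable, directly: writing $\+K_m\setminus\+P=\bigcup_s(\+K_m\setminus\+P_s)$, the clopen set $W_m^{(s)}:=\bigcup_{t\le T_m}\bigcup_{i\le k}T^{-in_t}(\+K_m\setminus\+P_s)$ has computable measure, and
\[
\leb W_m-\leb W_m^{(s)}\le T_m k\,\leb(\+P_s\setminus\+P)=T_m k\,(\leb\+P_s-p),
\]
which tends to $0$ computably since $p$ is computable. The same bound handles $\leb(V_m\cup W_m)$. The paper asserts the analogous computability of $\leb\+G_v$ without spelling this out. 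So no telescoping is needed, and your second proposed escape---integral tests with merely a computable \emph{upper bound} on the $L^1$-norm---is wrong as stated: that characterises \ML\ randomness, not Schnorr; Schnorr requires the norm itself to be computable.
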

We note that   this  also follows from a particular kind of  effective version of  Furstenberg multiple recurrence (Cor.\ \ref{prop:FurMRT3}), as  explained in Remark~\ref{Rem:Rute} below. However, we prefer to give a direct proof avoiding Cor.\ \ref{prop:FurMRT3}.
\begin{proof} We extend the previous proof, working with an effective approximation $\+ B= \cantor - \+ P = \bigcup_s  \+ B_s$ where the $\+ B_s$ are clopen. We may assume that $ \+ B_s = \Opcl {B_s}$ for some effectively given set $B_s$ of strings of length $s$. 

We fix an arbitrary $k \ge 1$ and show that $Z$ is    $k$-recurrent in $\+ P$. 
 Given $v \in \NN$ we will define a null $\PPI$ class $\+ Q_v \sub \cantor$ which plays a role similar to the class $\+ Q$  before. We also define an  ``error class'' $\+ G_v \sub \cantor$ that is $\SI 1$ uniformly in $v$. Further, $\leb \+ G_v$ is computable uniformly in $v$ and $\leb \+ G_v \le \tp {-v}$, so that $\seq { \+ G_v }\sN v$ is a Schnorr test.  If $Z$ passes this Schnorr test then $Z$ behaves essentially like a weakly random in the  proof of  Proposition~\ref{prop: Kurtz}, which shows that $Z$ is $k$-recurrent for  $\+ P$. 
 
For the details, given  $v \in \NN$, we define a computable sequence $\seq {n_t}$.  Let $n_0= 1$.
   Let $n= n_t \ge (k+1)n_{t-1}$ be so large that \bc $\leb (\+ B - \+ B_n) \le \tp{-t- v-k}$. \ec
   
As in the proof of Proposition~\ref{prop: Kurtz}, the  class
\[ \+ Q_v = \{ Y \colon \fa t  \bigvee_{1 \le i \le k} Y_{i n_t} \in \+ B_{n_t}\} \]
is  $\PPI$ and null. The ``error class" for  $v$ at stage $t$ is
\[ \+ G_{v}^t = \{ Y \colon \bigvee_{1 \le i \le k} Y_{i n_t} \in  \+ B - \+ B_{n_t}\}. \]
Notice  that $\leb \+ G^t_v \le k \tp{-t- v-k}$, and this measure is computable uniformly in $v,t$. Let $\+ G_v = \bigcup_t G^t_v$. Then $\leb \+ G_v$ is also uniformly computable in $v$, and bounded above by $\tp {-v}$, as required. 

If  $Z$ is Schnorr random, there is $v$ such that $Z \not \in \+ G_v$. Also, $Z \not \in \+ Q_v$, so that for some $t$ we have $Z_{i n_t} \in \+ P$ for each $i$ with  $1 \le i \le k$, as required.
\end{proof}

\section{Multiple recurrence for ML-random sequences}
For  general  $\PPI$ classes, the right level of randomness to obtain multiple recurrence is \ML\ randomness. We first remind the reader that  even the case of $1 $-recurrence  characterizes ML-randomness. This is a well-known result of \Kuc\  \cite{Kucera:85}.

\begin{prop} $Z$ is ML-random $\LR$ 
	 $Z$ is $1$-recurrent in  each  $\PPI$ class $\+ P$ with  $ 0<  p= \leb  \+ P$.  \end{prop}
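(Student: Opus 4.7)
The plan is to prove the two directions separately. The implication from $1$-recurrence to ML-randomness I would handle by contraposition. Assuming $Z$ is not ML-random, I would use the well-known fact that every tail $Z_n$ of a non-ML-random sequence is itself non-ML-random (ML-randomness is preserved by the shift $T$). Fixing a universal ML-test $(\+U_k)\sN k$, this means $Z_n \in \+U_1$ for every $n \ge 0$, so no tail of $Z$ lies in the $\PPI$ class $\+P := \cantor \setminus \+U_1$, which has measure at least $1/2$. This directly contradicts the hypothesis that $Z$ is $1$-recurrent in $\+P$.

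For the forward direction, fix a $\PPI$ class $\+P$ with $\leb \+P = p > 0$, and let $Z$ be ML-random. The key object is the ``non-recurrence'' class
\[
\+R = \{Y \in \cantor : \fa n \ge 1,\ Y_n \notin \+P\},
\]
which is itself $\PPI$. Once I establish that $\+R$ is null, I will conclude by the standard fact that a null $\PPI$ class is covered by an ML-test: its decreasing clopen approximants $\+C_s$ have computable measure tending to $0$, so one effectively extracts a subsequence with $\leb \+C_{s_k} \le \tp{-k}$. Hence $Z \notin \+R$, producing the required $n \ge 1$ with $Z_n \in \+P$.

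The main obstacle is showing $\+R$ is null. Here I would invoke the ergodicity of the shift $T$, already noted earlier in the paper (since $T$ is strongly mixing). Because $\+R \subseteq T^{-1}(\+R)$ and $T$ is measure-preserving, the two sets share measure and hence agree modulo null, so $\+R$ is essentially shift-invariant; ergodicity then forces $\leb \+R \in \{0,1\}$. Since $\+R \subseteq T^{-1}(\cantor \setminus \+P)$ has measure $1 - p < 1$, the only possibility is $\leb \+R = 0$. A more hands-on alternative, approximating $\+P$ from above by a clopen $\+C$ of slightly larger measure and exploiting independence across a spaced-out subsequence (in the style of the Schnorr-random proof), only yields a tail of $Z$ in $\+C$ rather than in $\+P$, so the appeal to ergodicity seems difficult to avoid cleanly.
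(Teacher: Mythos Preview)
Your $\LA$ direction is fine and matches the paper's approach: the paper uses the Levin--Schnorr $\PPI$ class $\{Y:\fa n\, K(Y\uhr n)\ge n-1\}$, which is essentially the complement of the first level of a universal ML-test, so your choice of $\+ P = \cantor\setminus \+ U_1$ is the same idea.

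The $\RA$ direction, however, has a genuine gap. Your class
\[
\+ R = \{Y : \fa n\ge 1,\ Y_n \notin \+ P\} = \bigcap_{n\ge 1} T^{-n}(\cantor\setminus \+ P)
\]
is \emph{not} in general $\PPI$. Since $\+ P$ is $\PPI$, its complement is $\SI 1$, each $T^{-n}(\cantor\setminus\+ P)$ is $\SI 1$, and the countable intersection is only $\PI 2$. Your nullity argument via ergodicity is correct, but a null $\PI 2$ class merely shows that every \emph{weakly 2-random} $Z$ is $1$-recurrent in $\+ P$ (compare Remark~\ref{Rem:Rute} in the paper), not every ML-random $Z$. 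The ``decreasing clopen approximants'' description you invoke applies to $\PPI$ classes, not to $\PI 2$ classes, so the extraction of an ML-test fails here.

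This is exactly why the forward implication is nontrivial and is attributed to \Kuc. The paper does not reprove it but cites \cite[3.2.24]{Nies:book} and \cite[6.10]{Downey.Hirschfeldt:book}; the actual argument builds a ML-test by iteratively exploiting the positive-measure complement along longer and longer initial segments, rather than by packaging the failure set as a single $\PPI$ class.
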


\begin{proof} $\RA$:  see e.g.\ \cite[3.2.24]{Nies:book} or
  \cite[6.10]{Downey.Hirschfeldt:book}.
	
	\n $\LA$: ML-randomness of a sequence $Z$  is preserved by   adding bits at the beginning. By the Levin-Schnorr Theorem, the $\PPI$ class $\+ P = \{ Y \colon \fa n  K(Y \uhr n) \ge n-1\}$ consists entirely of ML-randoms. So, if $Z$ is not ML-random, then no tail of $Z$ is in the $\PPI$ class $\+ P$. Further, $\leb \+ P \ge 1/2$.
\end{proof}

\begin{thm} \label{thm:ML} Let $\+ P \sub \cantor$  be a $\PPI$ class with  $ 0<  p= \leb  \+ P$. Each \ML\    random   $Z$ is multiply recurrent  in $\+ P$.
\end{thm}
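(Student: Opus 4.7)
My plan is to build an ML-test $\seq{\+ U_v}\sN v$ capturing every $Z$ that is not $k$-recurrent in $\+ P$, by adapting the construction used in the Schnorr-case proof. Write $\+ B = \cantor - \+ P = \bigcup_s \+ B_s$ with a nested effective clopen approximation $\+ B_s = \Opcl{B_s}$, $B_s \sub \tp s$. Fix $k \ge 1$. For each $v$, choose computable sequences $n_t = n_t(v)$ and $s_t = s_t(v)$ with $n_{t+1} \ge (k+1) n_t$ and $s_t \le n_t$, so that the clopen events $\+ A^{(t)}_v := \{Y : \bigvee_{1 \le i \le k} Y_{i n_t} \in \+ B_{s_t}\}$ depend on pairwise disjoint bit-blocks of $Y$ and are jointly independent under $\leb$. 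Since $1 - \leb \+ B_{s_t} \ge p$ we have $\leb \+ A^{(t)}_v \le 1 - p^k < 1$, so the $\PPI$ ``main'' class $\+ M_v := \bigcap_t \+ A^{(t)}_v$ is null, approximated from outside by clopen prefixes $\+ M^{(T)}_v := \bigcap_{t \le T} \+ A^{(t)}_v$ of computable, geometrically decreasing measure. The $\SI 1$ ``error'' class $\+ E^{(t)}_v := \{Y : \bigvee_i Y_{i n_t} \in \+ B - \+ B_{s_t}\}$ collects the approximation slack, and any non-$k$-recurrent $Z$ satisfies $Z \in \+ A^{(t)}_v \cup \+ E^{(t)}_v$ for every $t$, hence lies in $\+ M_v \cup \+ E_v$, where $\+ E_v := \bigcup_t \+ E^{(t)}_v$.

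\textbf{Main obstacle and workaround.}
The hard part is bounding $\leb \+ E_v$ effectively by a factor of $2^{-v}$. In the Schnorr case, computability of $p$ allowed $s_t$ to be picked so that $\leb(\+ B - \+ B_{s_t}) \le 2^{-t-v-k}$; here $p$ is only right-c.e., and no such effective choice is available. My plan is to dovetail over all rationals $q \in \QQ \cap (0,1)$ regarded as candidate lower bounds for $p$. For each pair $(q,v)$, re-run the Schnorr construction with the waiting condition ``$\leb \+ B_{s_t} \ge (1-q) - 2^{-t-v-c(q)}$'' together with the consistency guard ``$\leb \+ B_s \le 1-q$''. Fix a summable weighting $(\alpha_q)_q$ with $\sum_q \alpha_q \le 1$, say $\alpha_q \sim 1/\langle q \rangle^2$ for a computable enumeration $\langle \cdot \rangle$ of rationals, and let $c(q)$ be such that the per-$q$ error contribution is bounded by $\alpha_q \cdot 2^{-v-1}$. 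When $q \le p$, the guard never fires and, whenever the waiting succeeds, $\leb \+ B - \leb \+ B_{s_t} \le (1-q) - \leb \+ B_{s_t} \le 2^{-t-v-c(q)}$, so the error bound holds; when $q > p$, the guard eventually fires and halts the $q$-branch with only a clopen set of controlled computable measure enumerated, keeping the per-$q$ budget.

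\textbf{Completion.}
Inside each $(q,v)$-branch let $T(q,v)$ be the least $T$ for which the computable product $\prod_{t \le T}(1 - (1 - \leb \+ B_{s_t(q,v)})^k) \le \alpha_q \cdot 2^{-v-1}$; this search terminates since each factor is at most $1 - p^k < 1$. Set $\+ U_{q,v} := \+ M^{(T(q,v))}_{q,v} \cup \+ E_{q,v}$, an $\SI 1$ class of measure at most $\alpha_q \cdot 2^{-v}$, and $\+ U_v := \bigcup_q \+ U_{q,v}$, an $\SI 1$ class of measure at most $2^{-v}$. For non-$k$-recurrent $Z$ and any $v$, one checks that a rational $q < p$ sufficiently close to $p$ makes the waiting succeed past $T(q,v)$ stages, whence the dichotomy ``$Z \in \+ A^{(t)}_{q,v}$ or $Z \in \+ E^{(t)}_{q,v}$ for every $t \le T(q,v)$'' places $Z \in \+ U_{q,v} \sub \+ U_v$. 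Conversely, if $Z$ is \ML-random then $Z \notin \+ U_v$ for some $v$, so $Z \notin \+ U_{q,v}$ for the ``good'' $q$; this yields some $t \le T(q,v)$ with $\bigwedge_i Z_{i n_t} \notin \+ B_{s_t}$ (from $Z \notin \+ M^{(T(q,v))}_{q,v}$) and $\bigwedge_i Z_{i n_t} \notin \+ B - \+ B_{s_t}$ (from $Z \notin \+ E_{q,v}$), forcing $Z_{i n_t} \in \+ P$ for every $i = 1, \ldots, k$; so $Z$ is $k$-recurrent with shift $n = n_t(q,v)$, as required.
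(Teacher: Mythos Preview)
Your dovetailing workaround does not close the gap. The crucial unproved claim is that for each $v$ there is a rational $q<p$ for which the $(q,v)$-branch actually runs to completion, i.e.\ the waiting for $s_t$ succeeds for every $t\le T(q,v)$. The waiting for $s_t$ terminates only when $1-p\ge (1-q)-2^{-t-v-c(q)}$, that is, when $p-q\le 2^{-t-v-c(q)}$; so completion requires $p-q\le 2^{-T(q,v)-v-c(q)}$. But both $T(q,v)$ and $c(q)$ grow like $\log_2(1/\alpha_q)$, and with $\alpha_q\sim 1/\langle q\rangle^2$ the right-hand side is at most $(p-q)^{c}$ for some constant depending on $\langle q\rangle$ and $p$, while $\langle q\rangle$ must blow up as $q\to p$ for any fixed computable enumeration. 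A short computation (take $\beta=-\log_2(1-p^k)$ and set $L=\log_2(1/(p-q))$, $A=2\log_2\langle q\rangle$) shows the requirement becomes $L\ge (v+A)(1+1/\beta)$; for any enumeration by height one has $A\gtrsim 2L$ for the closest available $q$, so the inequality has no solution once $v$ is large. Hence for all sufficiently large $v$ there is no $q$ whose branch completes, and your $\+ U_v$ need not contain a non-$k$-recurrent $Z$. (A secondary looseness: for $q>p$ the error bound $\leb(\+ B-\+ B_{s_t})\le 2^{-t-v-c(q)}$ relies on $\leb \+ B\le 1-q$, which is false; you would need the guard to cut off enumeration of the error classes before this excess is realised, and that is not argued.)

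The paper avoids this difficulty by abandoning the product-of-independent-events template altogether. It builds prefix-free c.e.\ sets $C^r$ \`a la \Kuc: strings in $C^r$ extend strings $\sigma\in C^{r-1}$ and witness that removing $|\sigma|,2|\sigma|,\ldots,k|\sigma|$ bits lands in the complement of $\+ P$. The only place the size of $\leb(\cantor-\+ P)$ enters is in the contraction factor $q=k\,\leb\Opcl{B}$; the paper forces $q<1$ not by approximating $p$, but by excising a single finite set $D\subseteq B$ so that $\leb\Opcl{B\setminus D}<1/k$, and then handles the finitely many times an iterate can land in $\Opcl{D}$ by a separate, straightforward ML-test (as in the clopen case). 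No waiting, no guess at $p$, and no summable family of budgets is needed.
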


\begin{proof}
As before we fix an arbitrary $k \ge 1$ in order to  show that $Z$ is    $k$-recurrent in $\+ P$.  First we prove the assertion under the additional assumption  that $1-1/k <  p$.  This  generalises \Kuc's argument in  `$\RA$' of  the proposition above,  where $k=1$ and the additional assumption $0<p$ is already satisfied.

Let $B \sub \strcantor$ be a prefix-free c.e.\ set such that $\Opcl B = \cantor - \+ P$. We may assume that $B_0 = \ES$ and for each $t>0$, if $\sss \in B_t - B_{t-1}$ then $\sssl = t$. We define a uniformly c.e.\ sequence $\seq{C^r}$ of prefix-free sets which also have  the   property that at stage $t$ only strings of length $t$ are enumerated.
 
 For a string $\eta$ and $u \le |\eta|$, we write $(\eta)_u$ for  the string $\eta$ with the first $u$ bits removed.
 Let  $C^0$ only contain the empty string,  which is enumerated at stage $0$. Suppose  $r > 0$ and $C^{r-1}$  has been defined. Suppose $\sss$ is enumerated in $C^{r-1}$ at stage $s$ (so $\sssl =s$). For strings $\eta \succ \sss$ we search for the failure  of    $k$-recurrence in $\+ P$ that would be obtained by  taking  $s$ bits off $\eta$  for $k$ times.
At stage $t>   (k+1) s$,  for each string  $\eta $ of   length $t$  such that   $\eta \succ \sss$ and 
\[ \tag{$*$} \bigvee_{1 \le i \le k}  (\eta)_{si} \in B_{t-si}, \]
and no prefix of $\eta$ is in $C^r_{t-1}$,   put $\eta$  into $C^r$ at stage $t$.   
\begin{claim} $C^r$ is prefix-free for each $r$. \end{claim}
This holds for $r=0$. For $r> 0$ suppose that $\eta \preceq \eta'$ and both strings are in $C^r$. Let $t= |\eta|$. By inductive hypothesis the string $\eta$ was enumerated into $C^r$ via a unique  $\sss \prec \eta$, where $\sss \in C^{r-1}$. Then $\eta = \eta'$ because we chose the string in $C^{r}$  minimal under the prefix relation. This establishes the claim.
 
 By hypothesis $1> q = k \leb \Opcl B$. 
\begin{claim} \label{cl:small measure} For each $r \ge 0$ we have $\leb \Opcl{C^r}  \le q^r$. \end{claim}
This holds for $r=0$. Suppose  now that $r>0$. Let  $\sigma \in C^{r-1}$. The local measure above $\sss$ of strings $\eta $,  of a length $t$,  such that $\bigvee_{1 \le i \le k}  \eta_{si} \in B_{t-is}$ is at most $q$. The estimate follows by the prefix-freeness of $C^r$.

If   $Z$ is not $k$-recurrent  in $\+ P$, then $Z \in \Opcl{C^r}$ for each $r$, so $Z$ is not ML-random. 

We now remove the additional assumption that $1-1/k <  p$.  We define the sets $C^r$ as before.  Note that any string in $C^r$ has length at least $r$. 
  Everything will work except for Claim~\ref{cl:small measure}: if $\leb  \Opcl B\ge 1/k$ then $\leb \Opcl{C^r} $ could   be $1$. To remedy this,  we choose a finite set  $D \sub  B$ such that the set $\wt B = B- D$ satisfies $\leb \Opcl {\wt B} < 1/k$.  
Let $N = \max \{\sssl \colon \sss \in D\}$. We modify the argument of Prop.\ \ref{prop: Kurtz}, where  the clopen set  $\+ P$ there  now becomes  $\cantor - \Opcl D$.  

Let $C = \bigcup_r C^r$. Let $G_m$ be the set of prefix-minimal strings $\eta $  such that  $\eta  \in C$,  and there exist $m$ many $s> N$   as follows. \bi \item  $\eta \uhr s \in C$, and \item for some $i$ with  $1 \le i \le k$, $\eta \uhr{ [ si, s(i+1))}$ extends a string in $D$.  \ei 
(Informally speaking, if there are arbitrarily long such  strings along $Z$, then   the attempted test $\Opcl{C^r}$ might  not work, because the relevant ``block"  $\eta \uhr{ [ si, s(i+1))}$ may  extend a string  in $D$,  rather than one  in $\wt B$.)

The sets $G_m $ are uniformly $\SI 1$. By  choice of $N$ and independence, as in the proof of Prop.\ \ref{prop: Kurtz} we have $\leb \Opcl{G_{m+1}} \le (1- v^k) \leb G_m$, where $v = \leb (\cantor - \Opcl D)$. If  $Z$ is ML-random we can choose a least $m^*$   such that $Z \not \in \Opcl{G_{m^*}}$.  

Note that $m^*>0$ since $G_0 = \{\ES\}$. So choose $\rho \prec Z$ such that $\rho \in G_{m^*-1}$. Then 
 $\rho\in C^r$ for some  $r$, and  no $\tau$ with  $\rho \preceq \tau \prec Z$ is in $G_{m^*}$. 
 
 We define a ML-test that succeeds on  $Z$. Let $\wt C^r = C^r$. Suppose  $u >  r$ and $\wt C^{u-1}$ has been defined. For each  $\sss \in \wt C^{u-1}$,     put into   $\wt C^{u} $ all the  strings $\eta \succ \sss$ in $C^{u} $ so that    ($*$) can be strengthened to  $ \bigvee_{1 \le i \le k}  (\eta)_{si} \in \wt B_{t-is}$, where $s = \sssl$. 
 
 Let $q = k \leb \Opcl {\wt B}$.  Note that  $\leb \Opcl {\wt C^u} \le q^u$ as before. By the choice of $m^*$ we have  $Z \in \bigcap_{u\ge r} \Opcl {\wt C^u}$, so since $q   < 1$,  an appropriate refinement of the sequence of open sets $\seq {\Opcl {\wt C^u}}\sN u$ shows   $Z$ is not ML-random.

\end{proof}

\section{Towards the general case}
\subsection{Recurrence for $k$   shift operators} 
  The probability space under consideration is  now  $\+ X = \{0,1\}^{\NN^{\normalsize k}}$ with the product measure.
For $1\le i \le k$, the operator $T_i \colon \+ X \to \+ X$ takes one   ``face" of bits off in direction  $i$. That is, for $Z \in \+ X$,  \bc $T_i(Z)(u_1, \ldots, u_k) = Z(u_1, \ldots, u_i+1, \ldots, u_k)$. \ec
$Z$ is \emph{multiply recurrent} in a class $\+ P \sub \+ X$ if $ [Z \in \bigcap_{  i \le k} T^{-n}_i ( \+ P) $ for some $n$.

Algorithmic randomness notions for points in $\+ X$ can be  defined via  the  effective measure preserving isomorphism $\+ X \to \cantor$ given by a computable bijection $\NN^k \to \NN$. Modifying  the methods above,  we show the following. 

\begin{thm} Let $\+ P \sub \+ X$  be a $\PPI$ class with  $ 0< p=\leb  \+ P$.  Let $Z \in \+ X$.

If $Z$ is (a) Kurtz (b) Schnorr (c) ML-random, 
 then $Z$ is  {multiply} recurrent in $\+ P$    

in case (a) $\+ P$ is clopen  (b) $\leb  \+ P$ is computable (c) for any $\+ P$.\end{thm}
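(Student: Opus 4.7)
Our strategy is to lift each of the three single-operator recurrence proofs to the multidimensional setting, exploiting the analogous independence property: if $\+P\sub\+X$ is clopen and determined by the bits indexed by a finite set $F\sub\NN^k$, then for every sufficiently large $n$ the translates $F+ne_1,\dots,F+ne_k$ are pairwise disjoint, so the events $T_i^{-n}(Y)\in\+P$ are mutually independent, each of probability $p$. This is precisely the combinatorial content that Prop.\ \ref{prop: Kurtz} and its successors extracted from the $k$ powers of the 1D shift, so each of the three proofs propagates almost verbatim.

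For case (a), pick $n_0$ with $\+P$ determined by the bits in $[0,n_0)^k$, and recursively choose $n_t>(k+1)n_{t-1}$ large enough that the blocks $[0,n_0)^k+n_te_i$ (for $1\le i\le k$) are mutually disjoint and disjoint from every block used at stages $t'<t$. Then $\+Q=\bigcap_t\{Y:\bigvee_{1\le i\le k}T_i^{-n_t}(Y)\notin\+P\}$ is $\PPI$, an intersection of independent events each of measure $1-p^k<1$, hence null; Kurtz randomness of $Z$ forces $Z\notin\+Q$. Case (b) follows the same blueprint as the Schnorr theorem: write $\+X-\+P=\bigcup_s\+B_s$ with $\+B_s$ clopen, fix $v\in\NN$ and choose a computable sequence $\seq{n_t}$ with $\leb(\+B-\+B_{n_t})\le\tp{-t-v-k}$ and disjoint relevant blocks as above, then build the null $\PPI$ class $\+Q_v=\{Y:\fa t\bigvee_i T_i^{-n_t}(Y)\in\+B_{n_t}\}$ and the Schnorr test component $\+G_v=\bigcup_t\{Y:\bigvee_i T_i^{-n_t}(Y)\in\+B-\+B_{n_t}\}$, whose measure is uniformly computable in $v$ and bounded by $\tp{-v}$ (using computability of $\leb\+P$).

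Case (c) is the substantive step and adapts \Kuc's argument from the proof of Thm.\ \ref{thm:ML}. Fix a computable bijection $\pi\colon\NN\to\NN^k$ enumerating $\NN^k$ in growing cubes, with a computable $M$ such that $\{\pi(0),\dots,\pi(M(s)-1)\}\supseteq[0,s)^k$. Identify $\+X$ with $\cantor$ via $\pi$; a ``string'' of length $\ell$ is then a specification on $\{\pi(0),\dots,\pi(\ell-1)\}$. Write $\+X-\+P=\Opcl B$ for a prefix-free c.e.\ set $B$, with elements of $B_t-B_{t-1}$ of length $t$. Set $C^0=\{\ES\}$, and for $\sss\in C^{r-1}$ of length $s$, at each stage $t\ge M((k+1)s)$ enumerate into $C^r$ every length-$t$ extension $\eta\succ\sss$ with no proper prefix in $C^r_{t-1}$ satisfying $\bigvee_{1\le i\le k}T_i^{-s}(\eta)\in B_{t-s}$. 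The condition depends only on bits of $\eta$ at positions in $\bigcup_i(F_b+se_i)$ for the witnessing $b\in B_{t-s}$, and these positions lie in $[0,t)^k\setminus R_\sss$ (where $R_\sss\sub[0,s)^k$ is the region $\sss$ specifies); conditioning on $\sss$, a union bound across the $k$ directions gives local measure at most $q=k\leb\Opcl B$. When $q<1$ this yields $\leb\Opcl{C^r}\le q^r$, so failure of $k$-recurrence forces $Z\in\Opcl{C^r}$ for every $r$, contradicting ML-randomness via a subsequence. In the general case $q\ge 1$, pick a finite $D\sub B$ so that $\wt B=B-D$ satisfies $k\leb\Opcl{\wt B}<1$, introduce the $\SI 1$ sets $G_m$ tracking how many direction-$i$ blocks above a prefix of $Z$ hit $\Opcl D$, verify $\leb\Opcl{G_{m+1}}\le(1-v^k)\leb\Opcl{G_m}$ with $v=\leb(\+X-\Opcl D)$, and refine $\seq{C^r}$ above the $m^*$-th failure stage to obtain an ML-test capturing $Z$, exactly as in the proof of Thm.\ \ref{thm:ML}.

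The principal obstacle is the combinatorial bookkeeping in case (c): a length-$\ell$ specification coming from $\pi$ does not correspond to a clean box in $\NN^k$, so the stage $t$ at which one can evaluate $T_i^{-s}(\eta)\in B_{t-s}$ must be taken sufficiently large that $\{\pi(0),\dots,\pi(t-1)\}$ contains every relevant translate $F_b+se_i$. With a monotone-enough $\pi$ this blow-up is a computable function of $s$ and does not affect the measure estimates, which depend only on disjointness of the relevant bit positions from $R_\sss$ and the union bound across the $k$ directions.
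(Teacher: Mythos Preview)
Your overall strategy---lifting each of the three one-dimensional proofs to $\+X=\{0,1\}^{\NN^k}$ via the independence of translates in the $k$ coordinate directions---is exactly what the paper does.  The chief difference is in the implementation of case~(c).  The paper avoids the linearisation via a bijection $\pi\colon\NN\to\NN^k$ entirely: it works directly with \emph{arrays} $\sigma\colon\{0,\dots,n-1\}^k\to\{0,1\}$ as the finite approximants, and defines an operation $(\sigma)_{i,s}$ that removes $s$ faces in direction~$i$ and truncates to a cube of side $n-s$.  Because the bits of $(\eta)_{i,s}$ lie in $[0,t-s)^{i-1}\times[s,t)\times[0,t-s)^{k-i}$, which is disjoint from the cube $[0,s)^k$ determining $\sigma$, the measure estimate $\leb\Opcl{C^r}\le q^r$ follows without any of the ``principal obstacle'' bookkeeping you flag; all finite objects stay cube-shaped throughout.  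Your route through $\pi$ is not wrong, but the array formalism is the clean way to do it and removes the need for the function $M$ and the monotonicity hypothesis on $\pi$.

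Two smaller points.  In case~(a) the paper observes that simple multiples $rn_1$ already give mutually disjoint blocks across both $i$ and $r$ (since in some coordinate one block sits in $[rn_1,(r+1)n_1)$ and the other in $[0,n_1)$ or $[r'n_1,(r'+1)n_1)$), so your recursive $n_t>(k+1)n_{t-1}$ works but is more than necessary.  In case~(b) the paper does not spell out the direct Schnorr argument but instead invokes the general observation of Remark~\ref{Rem:Rute}.  Finally, your notation $T_i^{-n}(Y)\in\+P$ for a point $Y$ is a slip; throughout you mean $T_i^{n}(Y)\in\+P$, i.e.\ $Y\in T_i^{-n}(\+P)$.
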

\begin{proof} For the duration of this proof, by an \emph{array} we mean a map $\sss \colon \{0, \ldots, n-1\}^k \to \{0,1\}$. We call $n$ the \emph{size} of $\sss$ and write $n= \sssl$. The letters $\sss, \tau, \rho, \eta $ now denote arrays. If $\sss$ is an array of size $n$, for $s \le n$ and $i \le k$ let $(\sss)_{i,s}$ be the array $\tau$ of size $n-s$ such that   \bc $\tau (u_1, \ldots, u_k) = \sss (u_1, \ldots, u_i+s, \ldots, u_k)$ \ec
 for $u_1, \ldots, u_k \le n-s$.  This operation removes $s$ faces in direction $i$, and then in the remaining directions cuts the  faces  from the opposite side in order to obtain an array.
For a  set $S$ of arrays we define $\Opcl S = \{ Y  \in \+ X \colon \, \ex \sss \in S \, [\sss \prec Y]\}$ where the ``prefix"  relation $\prec$ is defined as expected.   

Suppose  that  $Z$ is not $k$-recurrent in $\+ P$ for some $k \ge 1$.

\n (a). As in Prop.\ \ref{prop: Kurtz}  we define a null $\PPI$ class  $\+ Q \sub \+ X$ containing $Z$. Let $n_1$ be least such that $\+ P = \Opcl F$ for some set of arrays of that all have size $n_1$. Let 
\[ \+ Q = \bigcap_{r \ge 1} \{ Y \colon   \bigvee_{1 \le i \le k} T_i^{rn}(Y)  \not \in \+ P\}. \]
 By the choice   of $n_1$  the conditions in  the same disjunction are independent, so  we have \[ \leb (\bigvee_{1 \le i \le k} T_i^{rn}(Y)  \not \in \+ P ) =  1 - p ^k < 1.\] The  $\PPI$ class $\+ Q$ is the independent intersection of  such classes  indexed by~$r$. Therefore $\+ Q$ is null. By hypothesis $Z \in \+ Q$. So $Z$ is not weakly random.
 
 \n (b).  We could modify the previous argument. However, this also follows by  the general fact in Remark~\ref{Rem:Rute} below. 
 
 \n (c). The argument is   similar to the proof of Theorem~\ref{thm:ML} above. The definition of the c.e.\ set $B$ and its enumeration are as before, except that  each string of length $n$ is now an array of size~$n$. In particular, an array enumerated at a stage $s$ has size $s$. 
 
 Let  $C^0$ only contain the empty array,  which is enumerated at stage $0$. Suppose  $r > 0$ and $C^{r-1}$  has been defined. Suppose $\sss$ is enumerated in $C^{r-1}$ at stage $s$ (so $\sssl =s$). 
 
 At a stage $t >  2s$,  for each array  $\eta $ of   size $t$  such that   $\eta \succ \sss$ and 
\[ \tag{$*$} \bigvee_{1 \le i \le k}  (\eta)_{i,s}  \in   B_{t-s}, \]
and no array that is a prefix of $\eta$ is in $C^r_{t-1}$,   put $\eta$  into $C^r$ at stage $t$.  
 As before one checks that   $C^r$ is prefix-free for each $r$.  

Choose a finite set of arrays   $D \sub  B$ such that the set $\wt B = B- D$ satisfies $\leb \Opcl {\wt B} < 1/k$.  
Let $N = \max \{\sssl \colon \sss \in D\}$.  
Let $C = \bigcup_r C^r$. Let $G_m$ be the set of prefix-minimal arrays $\eta  $  such that  $\eta  \in C$, and  there exist $m$ many $s> N$   as follows. 
\bi \item  $\eta \uhr {\{ 0, \ldots, s-1\}^k} \in C$, and \item for some $i$ with  $1 \le i \le k$, $(\eta)_{i,s}$ extends an array in $D$.  \ei 
 The sets $G_m $ are uniformly $\SI 1$. By  choice of $N$ and independence  $\leb \Opcl{G_{m+1}} \le (1- t^k) \leb G_m$, where $t = \leb (\cantor - \Opcl D)$. If  $Z$ is ML-random we can choose a least $m^*$ be such that $Z \not \in \Opcl{G_{m^*}}$, and  $m^*>0$ since $G_0 = \{\ES\}$. So choose $\eta \prec Z$ such that $\eta \in G_{m^*-1}$. Then 
 $\eta \in C^r$ for some  $r$, and  no $\tau$ with  $\eta \preceq \tau \prec Z$ is in $G_{m^*}$. 
 

  Let $\wt C^r = C^r$. Suppose  $u >  r$ and $\wt C^{u-1}$ has been defined. For each  $\sss \in \wt C^{u-1}$,      put into   $\wt C^{u} $ all the  arrays  $\eta \succ \sss$ in $C^{u} $ so that    ($*$) can be strengthened to   $\bigvee_{1 \le i \le k}  (\eta)_{i,s}  \in   \wt B_{t-s}$, where $s = \sssl$ and $t = |\eta|$. 
 
 Let $q = k \leb \Opcl {\wt B}$.  Then  $\leb \Opcl {\wt C^u} \le q^u$ as before. By the choice of $m^*$ we have  $Z \in \bigcap_{u\ge r} \Opcl {\wt C^u}$, so since $q   < 1$,    $Z$ is not ML-random.
\end{proof} 

\subsection{The  putative full result} It is likely that a multiple recurrence theorem holds in greater generality. For background on computable  probability spaces and how to define randomness notions for points in them, see e.g.\ \cite{Gacs.Hoyrup:11}.


\begin{conjecture} \label{big conj}Let $(X, \mu) $ be a computable probability space. Let $T_1, \ldots, T_k$ be computable measure preserving transformations that commute pairwise. Let $\+ P $ be a $\PPI$ class with $\mu P> 0$. 

If $z \in \+ P$ is ML-random then $\ex n  [ z \in \bigcap_{  i \le k} T^{-n}_i ( \+ P)]$. 
\end{conjecture}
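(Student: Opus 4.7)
The plan is to generalize the strategy of Theorem~\ref{thm:ML} to arbitrary computable systems, replacing the coordinate independence of the shift by an effective quantitative form of Furstenberg's multiple recurrence theorem. Write $R_n = \bigcap_{i\le k} T_i^{-n}(\+ P)$. Since the $T_i$ are computable and measure-preserving, each $R_n$ is $\PPI$ uniformly in $n$, so the ``failure set''
\[
\+ D \ =\ \+ P \setminus \bigcup_{n \ge 1} R_n \ =\ \{z \in \+ P : \forall n \ge 1,\ z \notin R_n\}
\]
is $\Pi^0_2$, and Cor.~\ref{prop:FurMRT3} gives $\mu(\+ D) = 0$. The goal is to exhibit a Martin-L\"of test covering $\+ D$, so that no ML-random point can belong to it.

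Fix an effective clopen basis for the computable probability space $X$ and a c.e.\ open enumeration of $X \setminus \+ P$. I would imitate the $C^r$ construction from the proof of Theorem~\ref{thm:ML}: a cell $[\eta]$ is added to $\Opcl{C^r}$ when it extends a cell $[\sigma] \in \Opcl{C^{r-1}}$ and witnesses, at a scale $s$ strictly larger than the scales of all of its ancestors, a failure of $k$-recurrence, i.e.\ for some $i \le k$ the clopen enumeration already certifies $T_i^s [\eta] \subseteq X \setminus \+ P$. Each $\Opcl{C^r}$ is uniformly $\Sigma^0_1$, and one checks $\+ D \subseteq \bigcap_r \Opcl{C^r}$. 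The aim is a geometric bound $\mu(\Opcl{C^r}) \le q^r$ for some $q < 1$ computable from $\mu(\+ P)$ and $k$; a Solovay-style thinning then yields the required Martin-L\"of test.

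The main obstacle is obtaining this geometric decay. In the shift setting it came from independence of faraway coordinates: conditional on failure at scale $s_{r-1}$, the probability of failure at a sufficiently larger scale $s_r$ factored and was bounded by $1 - \mu(\+ P)^k < 1$. For arbitrary commuting $T_i$ no independence is available, and the constant $q$ must come from an effective quantitative multiple recurrence estimate of the form
\[
\liminf_{N \to \infty} \frac{1}{N}\sum_{n=1}^N \mu(R_n) \ \ge\ c(\mu(\+ P),\, k) \ >\ 0,
\]
with $c$ computable in $\mu(\+ P)$ and $k$. Producing such a $c$ is essentially an effectivization of Furstenberg--Katznelson. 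The two most plausible routes are either to effectivize the Host--Kra / Ziegler characteristic factor theory, which gives explicit bounds by reducing to nilfactors, or to extract effective (though tower-type) bounds from a hypergraph regularity approach (Gowers; Nagle--R\"odl--Schacht--Skokan; Tao). Either would supply a computable lower bound on $q$.

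Two final technical points accommodate the full generality of the setting. The measure $\mu(\+ P)$ is only left-c.e.\ from the $\PPI$ description, so one waits until an approximation $p' < \mu(\+ P)$ crosses a chosen rational threshold and uses $c(p', k)$ as the working value. And, as in the second half of the proof of Theorem~\ref{thm:ML}, a persistent-failure sub-argument must handle the initial portion of the enumeration of $X \setminus \+ P$, where crude measure estimates prevent the geometric decay from biting immediately; once past that phase, the bound $\mu(\Opcl{C^r}) \le q^r$ takes over and yields the desired Martin-L\"of cover of $\+ D$.
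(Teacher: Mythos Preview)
The statement you are attempting is stated in the paper as a \emph{conjecture}; the paper does not prove it. The only observations offered (Remark~\ref{Rem:Rute}) are that weak $2$-randomness of $z$ suffices, since $\+ D = \+ P \cap \bigcap_n U_n$ is a $\Pi^0_2$ null set, and that Schnorr randomness suffices when $\mu\+ P$ is computable. The paper also records that the conjecture remains open even for computable Kronecker systems.

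Your proposal has a genuine gap at the decisive step. In the shift case the geometric decay $\leb\Opcl{C^r}\le q^r$ (Claim~\ref{cl:small measure}) comes entirely from \emph{conditional independence}: given $\sigma\in C^{r-1}$ with $|\sigma|=s$, the failure event at the next scale depends only on coordinates beyond position $s$, so its conditional probability above $[\sigma]$ equals its unconditional probability. For arbitrary commuting computable $T_i$ there is no such independence, and the estimate you invoke,
\[
\liminf_{N\to\infty}\frac{1}{N}\sum_{n=1}^{N}\mu(R_n)\ \ge\ c(\mu(\+ P),k)\ >\ 0,
\]
is an \emph{unconditional} average. It says nothing about $\mu(R_n^{\,c}\mid[\sigma])$ for a basic cell $[\sigma]$ that has already witnessed $r-1$ failures, which is exactly what your inductive step needs. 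Failures at different scales can be perfectly correlated: take a non-ergodic system $X=X_1\sqcup X_2$ with $\mu(X_i)=\tfrac12$, each $T_i$ preserving the pieces, and $\+ P=X_1$; then $R_n=X_1$ for every $n$, the liminf bound holds with $c=\tfrac12$, yet the failure sets $R_n^{\,c}=X_2$ all coincide and $\mu\Opcl{C^r}$ stays at $\tfrac12$ for every $r$. Neither Host--Kra/Ziegler structure theory nor hypergraph regularity supplies the conditional decay you require; both deliver only bounds on unconditional multiple-intersection measures. You have correctly located the obstacle but not removed it, and this is precisely why the statement is recorded as open.
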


\begin{remark} \label{Rem:Rute} {\rm   Let $U_n $ be the open set $ \{x \colon \,   x \not \in \bigcap_{i\le k} T^{-n}_i ( \+ P)\}$. Then $\mu ( \+ P \cap \bigcap_n U_n)=0$ by the classic multiple recurrence  theorem in the version of Cor.\ \ref{prop:FurMRT3}. Since $\+ P \cap \bigcap_n U_n$  is $\PI 2$, weak 2-randomness of $z$ suffices for the $k$-recurrence.

  Jason Rute has pointed out that if $X$ is Cantor space and $\mu \+ P$ is computable, then $\ex n [ z \in \bigcap_{  i \le k} T^{-n}_i ( \+ P)]$  for every Schnorr random $z \in \+ P$.  For in this case $\mu  \widehat U_n$ is uniformly computable where $\widehat U_n = \bigcap_{i< n}U_i$. Let $\+ P = \bigcap_n \+ P_n$ where the $\+ P_n$ are clopen sets computed uniformly in $n$. Let $G_n = \+ P_n \cap \widehat U_n$. Then $G_n$ is uniformly $\SI 1$ and $\mu (G_n)$ is   uniformly computable. Refining the sequence $\seq{G_n}$ we obtain a Schnorr test capturing $z$. }  \end{remark}


\section{Effective Multiple Recurrence for Irrational Rotations}

In the foregoing sections, we have seen examples of   mixing systems that  display
effective versions of Furstenberg multiple recurrence. At the other
end of the spectrum, highly structured systems also exhibit multiple
recurrence.  Such systems are called compact. An example is irrational rotations of the unit circle. rotations. 
We discuss   a  fact from  \cite{Furstenberg:2014} which implies that \emph{every}
point in such a system is multiply recurrent with respect to every of its neighbourhoods, rather than merely the
Kurtz random points.  
To establish this result, we briefly look at  topological dynamical systems. Such a  system
consists of a compact space $X$ and a continuous operator $T \colon X
\to X$. 

\begin{definition}[\cite{Furstenberg:2014}, Def.\ 1.1] 
A point $x$ in a topological dynamical system $(X,T)$ is \emph{$k$-recurrent}
if the condition of Definition~\ref{def:central} holds for each
neighbourhood $\+ P$ of $x$. \end{definition}

\begin{definition}
Given a  compact group $G$ and $a \in G$,  let  $T_a(x) = a \cdot x$. One  calls   $(G,T_a)$ a
\emph{Kronecker system}.
\end{definition}

For instance, for  $\alpha \in \RR$, the system  $(\mathbb{R}/\mathbb{Z}, T)$  where $T(x) = \alpha + x \mod 1$ is a Kronecker system. There is a unique invariant probability measure, called the Haar measure, on a Kronecker system. Hence  such a system  can also be viewed as  a measure-preserving system, which turns out to be  compact in the measure theoretic sense. It is known that every compact \emph{ergodic} system is equivalent to a Kronecker system in the sense that the two  systems are isomorphic when viewed on the $\sigma$-algebra of  measurable sets mod null sets.

\begin{lemma}(see \cite{Furstenberg:2014}, Chapter 1)
Every point in a Kronecker system is $1$-recurrent.
\end{lemma}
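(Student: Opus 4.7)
The plan is to reduce $1$-recurrence at an arbitrary point $x \in G$ to an accumulation statement about powers of $a$, and then establish that statement by a standard compactness argument in the compact topological group $G$.

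First, I would fix $x \in G$ and an arbitrary neighborhood $\+ P$ of $x$, and translate the recurrence condition into a condition at the identity. Since right multiplication by $x^{-1}$ is a homeomorphism of $G$, the set $U := \+ P \cdot x^{-1}$ is a neighborhood of $e = x \cdot x^{-1}$. Moreover, $T_a^n(x) = a^n \cdot x$, so $T_a^n(x) \in \+ P$ if and only if $a^n \in U$. Hence it suffices to prove: for every neighborhood $U$ of the identity $e$ there exists $n \ge 1$ with $a^n \in U$.

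Second, I would establish this accumulation fact by compactness. The sequence $(a^n)_{n \ge 1}$ lies in the compact space $G$, so it has a convergent subsequence $a^{n_k} \to b$ for some $b \in G$ and $n_1 < n_2 < \cdots$. By joint continuity of multiplication and inversion in the topological group $G$, the elements
\[
a^{n_j - n_k} \; = \; a^{n_j} \cdot (a^{n_k})^{-1}
\]
converge to $b \cdot b^{-1} = e$ as $k < j$ are taken sufficiently large along the subsequence. Given any neighborhood $U$ of $e$, choose $k < j$ with $a^{n_j - n_k} \in U$ and set $n := n_j - n_k \ge 1$; this $n$ witnesses $1$-recurrence of $x$ in $\+ P$ (and in particular $x$ is $1$-recurrent in the sense of the topological definition).

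Since this is a classical fact, there is no real obstacle; the only subtle point is keeping $n \ge 1$, which is handled automatically by picking $n_j > n_k$ from a strictly increasing index sequence, and the appeal to joint continuity of the group operations, which is built into the definition of a topological group and so requires no metric assumption on $G$.
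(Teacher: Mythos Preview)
Your argument is correct, with one small caveat: you extract a convergent \emph{subsequence} from $(a^n)_{n\ge 1}$, which presupposes that the compact group $G$ is sequentially compact (e.g.\ metrizable). The paper's definition says only ``compact group,'' and a product like $\{0,1\}^{I}$ for uncountable $I$ shows that compact groups need not be sequentially compact. The fix is routine: replace the convergent subsequence by an accumulation point $b$ of the set $\{a^n : n \ge 1\}$ (any infinite subset of a compact space has one, and if the set is finite some $a^n=e$ already), choose a neighborhood $V$ of $b$ with $V\cdot V^{-1} \subseteq U$, pick $m<n$ with $a^m, a^n \in V$, and conclude $a^{n-m}\in U$.

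Your route is genuinely different from the one the paper sketches. The paper follows Furstenberg's general machinery: use Zorn's lemma to produce a minimal closed invariant subsystem, observe that every point of a minimal system is recurrent, and then invoke the homogeneity of the group action to transport recurrence from the single point found to all points of $G$. You bypass minimal subsystems and Zorn's lemma entirely by exploiting the group structure at the outset, reducing everything to the statement that $e$ lies in the closure of $\{a^n : n \ge 1\}$, and proving that by a bare compactness argument. Your approach is more elementary and self-contained for Kronecker systems specifically; the paper's approach has the advantage of sitting inside the general Birkhoff recurrence framework for arbitrary compact dynamical systems, with the group structure entering only at the final transport step.
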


This is established by showing that there is some recurrent point $x_0
\in G$, by first considering a minimal subsystem consisting of points
with dense orbits, and by applying the Zorn's lemma. Since $G$ is a
group,  if there  is any recurrent point in
the system, then every point must be recurrent. 
 
 We can use the lemma itself to strengthen it. 
\begin{lemma}
Every point in a Kronecker system $(G,T_a)$ is multiply recurrent.
\end{lemma}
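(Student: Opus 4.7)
The plan is to reduce multiple recurrence in $(G,T_a)$ to single recurrence in a suitable \emph{product} Kronecker system, and then invoke the preceding lemma. The key observation is that $T_a^n(x) = a^n x$, so asking for $k$-recurrence of $x$ with respect to a neighborhood $P$ amounts to finding $n \ge 1$ with $a^{in} x \in P$ for all $1 \le i \le k$.

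Concretely, I would form the compact product group $H = G^k$, equipped with the product topology, and define $b = (a, a^2, \ldots, a^k) \in H$. Then $(H, T_b)$ is again a Kronecker system, since $H$ is a compact group and $T_b$ is left multiplication by an element of $H$. The previous lemma therefore applies, telling us that every point of $H$ is $1$-recurrent under $T_b$.

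Now fix $x \in G$, a neighborhood $P$ of $x$, and $k \ge 1$. Consider the diagonal point $\tilde x = (x,x,\ldots,x) \in H$ and the product neighborhood $P^k \subseteq H$ of $\tilde x$. By $1$-recurrence of $\tilde x$ in $(H, T_b)$, there is $n \ge 1$ with $T_b^{\, n}(\tilde x) \in P^k$. Unpacking the definition,
\[
T_b^{\, n}(\tilde x) \;=\; (a^n x,\, a^{2n} x,\, \ldots,\, a^{kn} x),
\]
so $a^{in} x \in P$ for every $1 \le i \le k$, which is precisely $k$-recurrence of $x$ in $(G, T_a)$. Since $k$ and $P$ were arbitrary, $x$ is multiply recurrent.

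There is no serious obstacle here: the only things to check are that $(H, T_b)$ really is a Kronecker system (immediate, since a finite product of compact groups is a compact group and $T_b$ is multiplication by an element) and that product neighborhoods $P^k$ form a basis of neighborhoods of $\tilde x$ in $H$ (immediate from the definition of the product topology). The substance is entirely in the idea of replacing the $k$ different iterates $T_a^{\, n}, T_a^{\, 2n}, \ldots, T_a^{\, kn}$ by a single iterate of the product transformation $T_b$ acting on the diagonal.
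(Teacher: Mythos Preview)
Your proof is correct and follows essentially the same argument as the paper: form the product Kronecker system $(G^k, T_{(a,a^2,\ldots,a^k)})$, apply the preceding $1$-recurrence lemma to the diagonal point $(x,\ldots,x)$ with respect to the product neighborhood $P^k$, and unpack coordinates. The only difference is cosmetic (your notation and the extra remarks about checking that $G^k$ is compact and that product neighborhoods suffice).
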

\begin{proof}
Given an integer $k \ge 2$, consider the tuple $t= (a, a^2, \ldots, a^k)$ in the compact group $H=G^k$. The system $(H, t)$ is also   Kronecker. Every point is $1$-recurrent in this system. In particular, for every $x \in G$, the point $y= (x, \ldots, x) \in H$ is $1$-recurrent. 

If $V$ is an open nbhd of $x$, then the cartesian power ${}^kV$ is an open nbhd of $y$. So $y \cdot  t^n \in {}^kV$ for some $n$. This means that $x a^{in} \in V$ for each $i \le k$, as required. 
\end{proof}  

Let $\alpha $ be a computable irrational. Brown Westrick (see \cite{LogicBlog:16}) has proved that every  ML-random point  in $[0,1]$ is multiply recurrent  in each  $\PPI$ class of positive measure  for at least one of the operators $x \to (x+ \alpha) \mod 1$ or $x \to (x- \alpha) \mod 1$. 
Note that this lends some further evidence to Conjecture~\ref{big conj}.  However, even for    Kronecker systems with computable group structure and computable Haar measure, the conjecture is   open.
%

\def\cprime{$'$} \def\cprime{$'$}

\end{document}